\newtheorem{theorem}{Theorem}
\newtheorem{lemma}[theorem]{Lemma}
\newtheorem{assumption}[theorem]{Assumption}
\numberwithin{equation}{section}
\numberwithin{theorem}{section}
\newcommand{\rr}{{\mathbb{R}}}
\newcommand{\nn}{{\mathbb{N}}}
\newcommand{\cp}{{\mathbb{C}_+}}
\newcommand{\ee}{{\mathbb{E}\,}}
\newcommand{\pp}{{\mathbb{P}}}
\newcommand{\oh}{{\mathcal{O}}}
\newcommand{\im}{{\operatorname{Im}\,}}
\newcommand{\tr}{{\operatorname{Tr}\,}}
\newcommand{\mean}[1]{{{\langle #1\rangle}}}
\newcommand{\beq}[1]{\begin{equation} \label{#1}}
\newcommand{\eeq}{\end{equation}}
\newcommand{\scal}{\mathcal{S}}
\newcommand{\tcal}{\mathcal{T}}
\begin{document}
\addtokomafont{author}{\raggedright}
\title{\raggedright Random characteristics for Wigner matrices}
\author{\hspace{-.075in}Per von Soosten and Simone Warzel}
\date{\vspace{-.3in}}
\maketitle
\minisec{Abstract} We extend the random characteristics approach to Wigner matrices whose entries are not required to have a normal distribution. As an application, we give a simple and fully dynamical proof of the weak local semicircle law in the bulk.
\bigskip
\bigskip

\section{Introduction} \label{sec:intro}
Starting with the seminal works of Erd\H{o}s, Schlein, and Yau~\cite{MR2481753,MR2810797}, a large portion of recent progress in random matrix theory rests on strong concentration of measure phenomena for the resolvent on almost microscopic scales. Such estimates are an important model-dependent step in proving Wigner-Dyson universality of the local eigenvalue statistics and uniform delocalization bounds for the eigenvectors. The mean-field setting is especially well-understood: the methods in~\cite{MR3068390, MR3800833, MR3699468} give strong results and the central ideas are robust enough to cover a wide range of models (see, for example, ~\cite{Aggarwal2019, MR3770875,MR3602820,MR3962004,MR3183577,MR3230002,MR3941370,nemish} and references therein). The most basic example consists of the $N \times N$ Wigner matrices, whose entries $H_{ij}$ are drawn, independently up to symmetry, from some density with mean zero and variance $N^{-1}$. A typical result proves that the resolvent $G(z) = (H-z)^{-1}$ has essentially deterministic entries
\beq{eq:lscidea} G_{ij}(z) \approx m_{sc}(z) \delta_{ij},
\eeq
the approximation being valid on the smallest possible scale $\im z \gg N^{-1}$. The function
\[m_{sc}(z) = \frac{-z + \sqrt{z^2 - 4}}{2} = \int_{-2}^2 \! \frac{1}{\lambda-z}  \frac{ \sqrt{4-\lambda^2}}{2\pi} \, d\lambda\]
is the Stieltjes transform of the semicircle law.

The approximations~\eqref{eq:lscidea} are usually proved by deriving an approximate self-consistent equation
\beq{eq:selfconsistent} 1 + (z + \mean{G(z)})G(z) \approx 0 ,
\eeq
where we used the notation $\mean{A} = N^{-1} \, \tr A$ that we retain throughout this paper. The stability of the self-consistent equation is then used to show that the approximate solution $G(z)$ is close to the solution $m_{sc}(z)$ of the exact equation. Because it is usually not possible to prove the validity of the self-consistent equation on local scales $\im z \gg N^{-1}$ directly, the analysis uses the stability of~\eqref{eq:selfconsistent} again to show that rough estimates on the validity of~\eqref{eq:selfconsistent} self-improve at finer scales. This idea enables a careful bootstrapping scheme, which allows one to successively improve the scale of the approximation~\eqref{eq:lscidea}.

It was noted by Pastur~\cite{MR0475502}, that the self-consistent equation~\eqref{eq:selfconsistent} can also be viewed as the terminal constraint at $t=1$ of the deterministic advection equation
\beq{eq:pastur} \partial_t G(t,z) = \mean{G(t,z)} \partial_z  G(t,z), \qquad G(0, z) = -z^{-1}, \eeq
which can be easily solved by considering a suitable set of deterministic characteristic curves. In fact, if one generates a Gaussian Wigner matrix dynamically by evolving the entries with Brownian motion~\cite{MR0148397}, one can derive a stochastic version of~\eqref{eq:pastur}
\beq{eq:firstsde} d G(t,z) = \mean{G(t,z)} \partial_z  G(t,z) \, dt + dM(t,z)
\eeq
with some explicit matrix-valued martingale $M(t,z)$ (see, for example,~\cite{MR3835476}). This approach can be used to derive the validity of the semicircle law on global scales $\im z = \oh(1)$ in the infinite-volume limit~\cite{MR2760897}. Deterministic characteristic curves have also featured in the analysis on local scales in more recent random matrix literature such as~\cite{adhikari,bourgade2,Huang2018,MR3405746}.

The works~\cite{uedelocalization1,MR3920502} showed that the SDE~\eqref{eq:firstsde} can also be analyzed on local scales by considering the evolution along the random characteristic
\beq{eq:firstchar} \dot{z}(t) = -\mean{G(t, z(t))},
\eeq
yielding a simple dynamical mechanism for directly proving concentration of measure for the resolvent. The method thus allows one to completely separate any stability arguments from local concentration of measure estimates, thereby circumventing the need for a bootstrap argument.

The purpose of this paper is to show that this approach to local resolvent estimates is not limited to Wigner matrices with Gaussian entries. The basis of this is the construction of a matrix martingale $H(t)$ whose rescaled entries follow a given density $\varrho$ at time $t=1$. This will be possible for densities $\varrho$ satisfying the following assumption. 
\begin{assumption}\label{assump} The density $\varrho > 0$ is strictly positive, has zero mean and unit variance, and the function
\beq{eq:adef} a(h) = \frac{1}{\varrho(h)} \int_h^\infty \! k\varrho(k) \, dk,
\eeq
is bounded and Lipschitz continuous on $\rr$.
\end{assumption}
The integral equation~\eqref{eq:adef} is equivalent to
\[\varrho(h) = \frac{C}{a(h)} \exp\left(-\int_0^h \frac{k}{a(k)} \, dk\right),\]
so the condition that $a$ be bounded essentially amounts to $\varrho$ being sub-Gaussian, whereas the Lipschitz continuity of $a$ is linked to the regularity of $\varrho$. Madan and Yor~\cite{MR1914701} used the Lipschitz continuity of $a$ to construct a scalar martingale $h(t)$ satisfying
\[ d h(t) =  a\left(\frac{h(t)}{\sqrt{t}}\right)^{1/2} \, db(t), \qquad h(0) = 0,\]
where $b$ is a standard Brownian motion. Inspired by an idea of Dupire~\cite{dupire}, they then showed that Kolmogorov's forward equation implies
\beq{eq:tdists} \pp(h(t) \in dx) = t^{-1/2} \varrho(t^{-1/2} x) \, dx.
\eeq
Hence, to construct the matrix process $H(t)$, we take an array $(B_{ij})$ of standard Brownian motions that are independent up to the constraint $B_{ij} = B_{ji}$ and define $H_{ij}(t)$ as the solution of the SDE
\[ d H_{ij}(t) = \frac{1}{\sqrt{N}} \, a\left( \sqrt{\frac{N}{t}} H_{ij}(t)\right)^{1/2} \, dB_{ij}(t), \qquad H_{ij}(0) = 0.\]
Then $ H(t) = \sqrt{t} H $ in distribution, where $H$ is a Wigner matrix whose rescaled entries $\sqrt{N} H_{ij}$ have distribution $\varrho$. It will be important in the sequel that the quadratic variation of $H_{ij}(t)$ satisfies
\[d [H_{ij}](t) = \sigma_{ij}(t) \, dt, \qquad \sigma_{ij}(t) = \frac{1}{N} a\left( \sqrt{\frac{N}{t}} H_{ij}(t)\right).\]

Combining It\^{o}'s lemma with the Neumann expansion of the resolvent yields the matrix-valued SDE
\[dG(t,z) = -G(t,z)\, dH(t) \, G(t,z) + G(t,z) \, dH(t) \, G(t,z) \, dH(t) \, G(t,z)\]
for the resolvent process $G(t,z) = (H(t) - z)^{-1}$ with $ z \in \mathbb{C}_+ $. Multiplying out the drift gives an expression of the form
\beq{eq:resolventsde} dG(t,z) = -G(t,z)\left( dH(t) - \tcal[t, G(t,z)] \, dt \right) G(t,z) + G(t,z) \, \scal [t, G(t,z)] \,G(t,z) \, dt.
\eeq
The matrix-valued operator $\scal[t, \cdot]$, which acts on an $N \times N$ matrix $A$ as
\[\scal[t,A]_{ij} = \delta_{ij} \sum_k \sigma_{ik}(t) A_{kk},\]
is a dominant term whose presence in~\eqref{eq:resolventsde} provides the self-energy corrections in the resolvent. On the other hand, the operator
\[\tcal[t, A]_{ij} =  (1 - \delta_{ij})\sigma_{ij}(t) A_{ji}\]
should be thought of as a finite-volume error reflecting the lack of Hermitian symmetry in our model. Therefore, the self-energy correction coincides with the It\^{o} correction. This is similar in spirit (and identical in the Gaussian case) to the cumulant expansion of He, Knowles, and Rosenthal~\cite{MR3800833}. However, the technical details are simpler here because the quadratic variation process naturally encodes the fluctuations around the Gaussian noise driving the dynamics.

We will illustrate our method by giving a new proof of the weak bulk local semicircle law for the normalized resolvent trace of Wigner matrices whose entries are drawn from densities satisfying Assumption~\ref{assump}. To state this result, we will make use of the stochastic domination language of~\cite{MR3068390}. Let $\{X(u)\}_{u \in U}$ and $\{Y(u)\}_{u \in U}$ be two non-negative $N$-dependent families of random variables. We will say that $X(u)$ is stochastically dominated by $Y(u)$ uniformly in $u \in U$, if, given any $\varepsilon, p > 0$, the inequality
\[ \sup_{u \in U} \pp\left( X(u) > N^\varepsilon Y(u) \right) \le N^{-p}\]
is satisfied for all sufficiently large $N \geq N_0(\varepsilon, p)$. We express this relationship by writing $ X \prec Y $ uniformly in $u \in U$. Although most of the probabilities in this paper can be controlled with an explicit exponential tail, we have refrained from doing so for the sake of brevity. Our proof of the local semicircle law will be valid in a bulk spectral domain
\[D = W + i(\eta, 1)\]
where $W = [W_1, W_2] \subset [-2+\kappa, 2-\kappa]$ for some $\kappa > 0$. For simplicity, we will assume that the minimal spectral scale is given by
\[\eta = N^{-1+\theta}\]
where $\theta > 0$ is fixed, but arbitrarily small.

\begin{theorem}\label{thm:lsc} We have
\[ \sup_{z \in D} \left| \mean{G(1, z)} - m_{sc}(z) \right| \prec \frac{1}{\sqrt{N\eta}}.\]
\end{theorem}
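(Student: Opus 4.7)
My plan is to evolve the difference $D(t) = \mean{G(t, z(t))} - m_{sc}(z)$ along the deterministic semicircle characteristic $z\colon[0,1]\to\cp$ defined by $\dot z(t) = -m_{sc}(t, z(t))$ with terminal condition $z(1) = z$, where $m_{sc}(t, w) = t^{-1/2}m_{sc}(w/\sqrt t)$ is the Stieltjes transform of the semicircle at time $t$ (interpreted as $-1/w$ at $t = 0$). By construction $m_{sc}(t, z(t)) \equiv m_{sc}(z)$ along the flow, and a direct analysis of the ODE shows $\im z(t) \gtrsim \eta + (1-t)$ uniformly in $t \in [0, 1]$ in the bulk. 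Applying It\^{o}'s formula to $\mean{G(t, z(t))}$ via~\eqref{eq:resolventsde} and unpacking the $\scal$-drift as $\frac{1}{N}\tr(G\scal[t, G]G) = \mean{G}\,\partial_z\mean{G} + \mathcal{F}(t)$, where the non-Gaussian fluctuation is
\[
\mathcal{F}(t) = \frac{1}{N}\sum_{j,\ell}(G^2)_{jj}\bigl(\sigma_{j\ell}(t) - \tfrac{1}{N}\bigr)G_{\ell\ell}\qquad (G\equiv G(t, z(t))),
\]
and combining with $\dot z\,\partial_z\mean{G} = -m_{sc}(t, z(t))\,\partial_z\mean{G}$ produces the linear SDE
\[
dD(t) = D(t)\,\partial_z\mean{G(t, z(t))}\,dt + dM(t) + \mathcal{F}(t)\,dt + \frac{1}{N}\tr(G\,\tcal[t, G]\,G)\,dt,
\]
with scalar martingale $dM(t) = -\frac{1}{N}\sum_{ij}(G^2)_{ij}\,dH_{ij}(t)$ and vanishing initial data $D(0) = -1/z(0) - (-1/z(0)) = 0$.

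Variation of parameters expresses $D(1)$ as an integral of the three forcing terms weighted by $\exp\!\bigl(\int_s^1 \partial_z\mean{G(u, z(u))}\,du\bigr)$. Differentiating Pastur's advection equation in $z$ shows that $\partial_z m_{sc}(u, z(u))$ solves $\frac{d}{du}\partial_z m_{sc} = (\partial_z m_{sc})^2$ along the characteristic, which integrates to the uniformly bounded factor $1 + (1 - s)\,m_{sc}'(z)$ with $|m_{sc}'(z)|\sim 1$ in the bulk; replacing $\partial_z\mean{G}$ by $\partial_z m_{sc}$ introduces only a quadratic-in-$D$ correction absorbed by a short a priori bootstrap. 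For the martingale, Ward's identity gives $\sum_{ij}|(G^2)_{ij}|^2 = \sum_k|\lambda_k - z(s)|^{-4}\leq (\im z(s))^{-3}\,\im\tr G\lesssim N\,(\im z(s))^{-3}$, and combining with $\sigma_{ij}(s)\leq C/N$ from the boundedness of $a$ in Assumption~\ref{assump} yields
\[
[M]_1 \lesssim \int_0^1 \frac{ds}{N^2\bigl((1-s) + \eta\bigr)^3} \lesssim \frac{1}{(N\eta)^2}.
\]
The Burkholder--Davis--Gundy inequality together with Doob's maximal inequality then give $|M(1)|\prec 1/(N\eta)$, which is already stronger than the claimed scale. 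The $\tcal$-term is a purely off-diagonal correction carrying an additional factor $1/N$ and is handled analogously.

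The genuine obstacle is the non-Gaussian fluctuation $\int_0^1\mathcal{F}(s)\,ds$: a pointwise estimate from $|\sigma_{j\ell} - 1/N|\leq C/N$ alone gives a useless $1/\eta$ bound. The natural remedy is to exploit the martingale identity $\sigma_{ij}(s)\,ds = d[H_{ij}]_s$ and integration by parts to reexpress $\int_0^1\mathcal{F}(s)\,ds$ as a stochastic integral against $dH_{ij}(s)$ plus boundary terms of lower order. An It\^{o} expansion of $s\mapsto a\bigl(\sqrt{N/s}\,H_{ij}(s)\bigr)$, whose smoothness is precisely the Lipschitz continuity of $a$ posited in Assumption~\ref{assump}, exposes an auxiliary scalar martingale whose quadratic variation is again controlled by Ward-type bounds; this produces the optimal $\prec 1/\sqrt{N\eta}$ scale that sets the rate in Theorem~\ref{thm:lsc}. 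Uniformity in $z \in D$ finally follows by a standard net argument using the trivial $N^2$-Lipschitz continuity of both $\mean{G(1, \cdot)}$ and $m_{sc}$ on $D$.
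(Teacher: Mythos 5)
Your outline follows the classical deterministic-characteristics route (flow along $\dot z=-m_{sc}(t,z(t))$, linearize, variation of parameters), which is a genuinely different strategy from the paper, but as written it has gaps at precisely the two points where the real difficulty of the theorem sits. First, the non-Gaussian fluctuation $\mathcal{F}$: your proposed remedy --- rewriting $\sigma_{ij}(s)\,ds=d[H_{ij}]_s$, integrating by parts, and performing an ``It\^{o} expansion of $s\mapsto a\bigl(\sqrt{N/s}\,H_{ij}(s)\bigr)$'' --- is not available under Assumption~\ref{assump}, since $a$ is only assumed bounded and Lipschitz, not $C^2$, so there is no It\^{o} formula for this composition; and even granting some substitute, you never produce an actual estimate, only the assertion that ``Ward-type bounds'' close it. The paper's solution to exactly this term is different and essential: conditioning on the minor $H^k(t)$ makes the summands $\sigma_{kj}(t)G^k_{jj}(t,z)$ independent, and a conditional Hoeffding inequality gives $\|\scal[t,G]-\mean{G}\|\prec\sqrt{(1+\im\mean{G})/(N\im z)}$ (Theorem~\ref{thm:hoeffding}); this concentration step, which your proposal does not contain in any form, is the key input for the non-Gaussian extension.

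Second, your argument silently relies on an a priori bound $\mean{\im G(s,z(s))}\lesssim 1$ along the flow in at least three places: the martingale estimate (your step $\im\tr G\lesssim N$ --- with only the trivial bound $\im\tr G\le N/\im z$ one gets $[M]_1\lesssim N^{-2}\eta^{-3}$, i.e.\ $|M(1)|\sim N^{-1}\eta^{-3/2}\gg(N\eta)^{-1/2}$), the replacement of the weight $\exp\bigl(\int_s^1\partial_z\mean{G}\bigr)$ by $1+(1-s)m_{sc}'(z)$ (the correction is $\partial_z(\mean{G}-m_{sc})$, which by a Cauchy estimate needs smallness of the difference on a whole neighborhood, not ``quadratic in $D$'' pointwise), and the $s$-integral of $\mathcal{F}$ even after a concentration bound. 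Along a deterministic characteristic there is no identity relating $\mean{\im G}\,ds$ to $d\,\im z(s)$, so this control must come from a multiscale continuity/bootstrap argument --- exactly the mechanism the paper is designed to avoid, and which you defer to ``a short a priori bootstrap'' without carrying it out. The paper circumvents all of this by running the \emph{random} characteristic $\dot\gamma=-\mean{G(t,\gamma)}$, so that $\mean{\im R(s,z)}\,ds=-d\,\im\xi(s,z)$ holds exactly; the quadratic variation, the Gr\"{o}nwall exponent controlling $1+\im\mean{R}$, and the counterterm integral then all telescope into boundary terms of size $\eta^{-1}$ or $\eta^{-1/2}$ (the ``integration trick''), with no a priori input. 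Your route could probably be completed, but only after adding a genuine concentration lemma for $\scal[t,G]$ and a full bootstrap in scales, neither of which is present.
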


We stress that this theorem first appeared in~\cite{MR2481753}. It has been extended both to Wigner matrices with minimal assumption on the distribution of the entries~\cite{Aggarwal2019} and to random matrices with a more general spread-out variance profile~\cite{MR3068390}.

Theorem~\ref{thm:lsc} implies that $\mean{ \im G(1,z)}$ is bounded both above and away from zero when $z \in D$. The Schur complement formula and classical concentration of measure results show that
\[ \sup_{z \in D} \left( \frac{1 + m_{sc}(z)}{N\eta}\right)^{- 1/2} \left| \frac{1}{G_{kk}(1, z)} +z + m_{sc}(z)\right| \prec   1 \]
so Taylor expansion yields
\beq{eq:diagentries}\sup_{z \in D} \left| G(1, z)_{kk}  - m_{sc}(z)\right| \prec \frac{1}{\sqrt{N\eta}}.
\eeq
Similar considerations apply to the off-diagonal entries
\beq{eq:offdiagentries} \sup_{z \in D} \left| G(1, z)_{jk}\right| \prec \frac{1}{\sqrt{N\eta}}, \qquad j\neq k.
\eeq
These calculations have become standard and are explained in great detail in~\cite{MR3699468}. However, given Theorem~\ref{thm:lsc}, no further bootstrapping is required to conclude~\eqref{eq:diagentries} and~\eqref{eq:offdiagentries}.

The organization of this paper is as follows. In Section~\ref{sec:selfenergy}, we show in which sense the evolution~\eqref{eq:resolventsde} approximates~\eqref{eq:firstsde}. Then, in Section~\ref{sec:characteristics}, we prove that~\eqref{eq:firstchar} defines an approximate characteristic flow, which we use to derive the local semicircle law in Section~\ref{sec:lsc}.

\section{The self-energy correction}\label{sec:selfenergy}
In comparison the Gaussian case, the main complication in the random characteristic approach for more general Wigner matrices is that the self-energy operator $\scal[t, \cdot]$ remains random and time-dependent. Nevertheless, we will prove in Theorem~\ref{thm:hoeffding} that
\[\scal[t, G(t,z)] \approx \mean{G(t,z)} + \oh\left(\sqrt{\frac{ 1 + \im \mean{G(t, z)}}{N \, \im z}} \right)\]
with very high probability so that the characteristic curve~\eqref{eq:firstchar} still counteracts a large part of the term
\[G(t,z) \, \scal [t, G(t,z)] \,G(t,z) \, dt\]
in~\eqref{eq:resolventsde}. The resulting error term is not small a-priori, but retains a specific structure that enables the Gr\"{o}nwall scheme of Lemma~\ref{thm:counterterm}.

For technical reasons, we will prove Theorem~\ref{thm:hoeffding} for spectral parameters $z$ in a very large domain
\beq{eq:dprimedef}D^\prime = (W_1 - 3\eta^{-1}, W_2 + 3\eta^{-1}) + i (\eta/4, 1 + 3\eta^{-1})
\eeq
and for times $t$ that are greater than a cutoff
\beq{eq:t0def} t_0 = N^{-K}
\eeq
with some fixed $K \in \nn$ to be specified later in the proof of Lemma~\ref{thm:counterterm}. In the statement of Theorem~\ref{thm:hoeffding}, and throughout this paper, $\|\cdot\|$ denotes the operator norm.

\begin{theorem}\label{thm:hoeffding} For any choice of $ K \in \nn $ in~\eqref{eq:t0def} we have
\[\sup_{t \in [t_0, 1], z \in D^\prime} \left(\frac{ 1 + \im \mean{G(t, z)}}{N \, \im z} \right)^{-1/2} \| \scal[t, G(t,z)] - \mean{G(t, z)} \| \prec 1.\]
\end{theorem}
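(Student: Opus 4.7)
The starting point is the observation that both $\scal[t, G(t,z)]$ and $\mean{G(t,z)}$ (viewed as a scalar matrix) are diagonal, so
\[
\|\scal[t, G(t,z)] - \mean{G(t,z)}\| = \max_{1 \le i \le N} \Bigl|\sum_{k=1}^{N} (\sigma_{ik}(t) - N^{-1}) G_{kk}(t,z)\Bigr|,
\]
where I have used the identity $\ee \sigma_{ik}(t) = N^{-1}$. This identity follows from Assumption~\ref{assump} together with the Madan--Yor distributional statement~\eqref{eq:tdists}: integration by parts in the defining formula~\eqref{eq:adef} for $a$ gives $\int a(h)\varrho(h)\,dh = \int h^2 \varrho(h)\,dh = 1$.

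For each fixed $i$, the plan is to apply a conditional Hoeffding inequality after cleanly separating the $i$-th row. Let $G^{(i)}(t,z)$ denote the resolvent of the $(N-1) \times (N-1)$ minor of $H(t)$ obtained by deleting row $i$ and column $i$, and let $\mathcal{F}^{(i)}$ be the $\sigma$-algebra generated by $\{H_{k\ell}(t) : k, \ell \neq i\}$. The Schur complement identity $G_{kk} = G^{(i)}_{kk} + G_{ki}G_{ik}/G_{ii}$ for $k \neq i$, combined with the Ward identity $\sum_k |G_{ik}|^2 = \im G_{ii}/\im z$ and the uniform bound $|\sigma_{ik} - N^{-1}| \le C/N$ (a consequence of the boundedness of $a$), shows that the full sum above differs from
\[
S^{(i)} \;=\; \sum_{k \neq i} (\sigma_{ik}(t) - N^{-1})\, G^{(i)}_{kk}(t,z)
\]
by a deterministic error of order $1/(N \im z)$, which already lies comfortably below the target precision. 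Conditional on $\mathcal{F}^{(i)}$, the summands in $S^{(i)}$ are independent and take values in a range of size $O(|G^{(i)}_{kk}|/N)$, while the Herglotz bound $|G^{(i)}_{kk}|^2 \le \im G^{(i)}_{kk}/\im z$ together with Cauchy interlacing produces the variance estimate
\[
\sum_{k \neq i} \frac{|G^{(i)}_{kk}|^2}{N^2} \;\le\; \frac{\im \mean{G^{(i)}}}{N \im z} \;\le\; C\,\frac{1 + \im \mean{G(t,z)}}{N \im z}.
\]
Hoeffding's inequality, applied conditionally on $\mathcal{F}^{(i)}$, then gives $|S^{(i)}| \prec \sqrt{(1 + \im\mean{G(t,z)})/(N \im z)}$ for each fixed $i$, $t$, $z$.

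The extension to a uniform statement over $i \in \{1, \ldots, N\}$ and $(t,z) \in [t_0, 1] \times D^\prime$ follows from a standard polynomial net argument. On $D^\prime$ the resolvent is analytic in $z$ with $\|\partial_z G\| \le \|G\|^2 \le 16/\eta^2$, while the $t$-regularity of $\sigma_{ik}(t)$ and $G(t,z)$ is controlled through the Burkholder--Davis--Gundy inequality applied to $H(t)$, whose quadratic variations $[H_{ij}](t)$ are each bounded by $\|a\|_\infty t / N$. Both moduli of continuity are at most polynomial in $N$ uniformly on $[t_0, 1] \times D^\prime$, so a mesh of spacing $N^{-A}$ with $A$ chosen large enough (depending on $K$) reduces the supremum to a union bound over $N^{O(A+K)}$ events. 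The main technical obstacle I expect is the careful bookkeeping of these Lipschitz-in-$(t,z)$ estimates down to times as small as $t_0 = N^{-K}$, rather than anything in the probabilistic core, which is nothing more than the conditional Hoeffding bound described above.
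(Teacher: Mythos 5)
Your proposal is correct and follows essentially the same route as the paper: reduce to the diagonal entries, decouple the $i$-th row via the minor/Schur identity with a deterministic $O(1/(N\operatorname{Im}z))$ error controlled by the Ward identity, apply Hoeffding conditionally on the minor with variance bounded by $\operatorname{Im}\langle G\rangle/(N\operatorname{Im}z)$, and finish with a polynomial net in $(t,z)$ whose mesh depends on $K$ because of the $1/\sqrt{t}$ singularity in $\sigma_{ij}(t)$ (where, as in the paper, the Lipschitz continuity of $a$ is what makes the stochastic continuity bound work). The only cosmetic differences are that you center $\sigma_{ik}-N^{-1}$ and use the $(N-1)\times(N-1)$ minor, whereas the paper keeps $\sum_j\sigma_{kj}G^k_{jj}$ with the zeroed row/column matrix and identifies its conditional mean as $\langle G^k\rangle$.
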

\begin{proof}
We first show that
\beq{eq:singlecounter} \left |\scal[t, G(t,z)]_{kk} - \mean{G(t,z)} \right| \prec \sqrt{\frac{ 1 + \im \mean{G(t, z)}}{N \, \im z}} 
\eeq
uniformly in  $  t \in [t_0, 1] $, $ z \in D^\prime $, and $k \in \{1, \dots, N\}$. Let $H^k(t)$ denote the matrix obtained by replacing the $k$-th row and column of $H(t)$ by zeros. Denoting by $G^k(t,z)$ the resolvent of $H^k(t)$, the resolvent identity implies
\beq{eq:minorid}G_{jj}(t,z) = G^k_{jj}(t,z) + \frac{G_{kj}(t,z) G_{jk}(t,z)}{G_{kk}(t,z)}
\eeq
when $j \neq k$. Since $\sigma_{jk}(t) \leq CN^{-1} $ by Assumption~\ref{assump}, we conclude the uniform deterministic estimate
\begin{align*}
 \left|\sum_j \sigma_{kj}(t) (G^k_{jj}(t,z) - G_{jj}(t,z))\right|  & \leq \frac{C}{N \, \im z} + \frac{1}{|G_{kk}(t,z)|}\sum_j \sigma_{kj}(t) |G_{kj}(t,z)|^2 \\
& \leq \frac{C}{N \, \im z} \left( 1+ \frac{\im G_{kk}(t,z)}{|G_{kk}(t,z)|} \right)  \leq  \frac{C}{N \, \im z}  
\end{align*}
using~\eqref{eq:minorid} and the trivial resolvent bound for the summand with $j=k$. Similarly, we have 
\[\left| \langle G^k(t,z) \rangle - \langle G(t,z) \rangle \right| \leq \frac{C}{N\, \im z}.\]
To prove~\eqref{eq:singlecounter} it thus suffices to show that
\beq{eq:singlecounter1} \left| \sum_j \sigma_{kj}(t) G^k_{jj}(t,z) - \langle G^k(t,z) \rangle \right| \prec  \sqrt{\frac{1+ \im \langle G^k(t,z) \rangle}{N \, \im z}} 
\eeq
uniformly in $  t \in [t_0, 1] $, $ z \in D^\prime $, and $k \in \{1, \dots, N\}$. After conditioning on $H^{k}(t)$, the random variables $\sigma_{kj}(t) G^k_{jj}(t,z) $ are independent and bounded by $ C\, N^{-1} |G^k_{jj}(t,z)| $. Using Hoeffding's inequality with respect to the conditional probability $\pp_k$ we get
\[\pp_k\left( \left| \sum_j \sigma_{kj}(t) G^k_{jj}(t,z)  - \mu \right|  >  \upsilon \alpha  \right) \le 2 \exp \left(- c\alpha^2 \right)\]
with
\[\mu =  \ee_k \left[ \sum_j \sigma_{kj}(t) G^k_{jj}(t,z)\right]  = \mean{ G^k(t,z)}\]
and
\[\upsilon^2 = \frac{1}{N^2}\sum_{j} |G^k_{jj}(t,z)|^2 \le \frac{1}{N^2} \sum_{i,j} |G^k_{ij}(t,z)|^2 = \frac{ \mean{\im G^k(t,z)}}{N \, \im z},\]
which proves~\eqref{eq:singlecounter1}.

The extension of~\eqref{eq:singlecounter} to the maximum over $k$ is by the union bound, whereas the extension to the supremum over all $z \in D^\prime$ and $t \in [t_0, 1]$ beyond the cutoff $ t_0 = N^{-K }$ is by a stochastic continuity bound that we turn to in the next step. We fix $ r > 0 $ and consider the neighborhood
\[B_r(t,z) = \{(s, w) \in  [t_0, 1] \times D^\prime: |t-s|^2 + |z-w|^2 \le r^2 \}\]
around some point $(t,z) \in [t_0, 1] \times D^\prime$. 
The theorem then follows from~\eqref{eq:singlecounter} and 
\beq{eq:stochcont} \sup_{(s,w) \in B_r(t,z)} \|\scal[s, G(s, w)] - \scal[t, G(t, z)] \| \prec N^{K} r^{1/2}.
\eeq
Indeed, for every $L \in \nn$ and $r = N^{-L}$, there exists an $L^\prime \in \nn$ and a finite grid  $\Lambda \subset [t_0, 1] \times D^\prime$ of cardinality $ |\Lambda| \le N^{L^\prime}$ such that $\operatorname{dist}(\Lambda, [t_0,1] \times D) \le r $.
Let $\varepsilon, p > 0$ be arbitrary. Applying the union bound to~\eqref{eq:singlecounter} and~\eqref{eq:stochcont} shows that the events
\[ \| \scal[t, G(t,z)] - \mean{G(t, z)} \|   \le N^\varepsilon  \left(\frac{ 1 + \im \mean{G(t, z)}}{N \, \im z} \right)^{1/2}\]
and
 \[\sup_{(s,w) \in B_r(t,z)} \|\scal[s, G(s, w)] - \scal[t, G(t, z)] \|   \le N^{\varepsilon + K} r^{1/2} \]
hold simultaneously for all $(t,z) \in \Lambda$ with probability $1 - N^{-p}$, when $N$ is large enough. The claim of Theorem~\ref{thm:hoeffding} follows since $L$ can be chosen arbitrarily large.

Finally, to prove~\eqref{eq:stochcont}, we note first that $\|H(t+s) - H(t)\|$ is a submartingale in $s \geq 0$. Doob's inequality and classical norm bounds~\cite{MR2906465} for the rescaled Wigner matrix $H(t+r) - H(t)$ show that
\[ \left(\ee \left| \sup_{s \in [t, t+r]} \|H(s) - H(t)\| \right|^p \right)^{1/p}  \le \frac{p}{p-1}  \left( \ee \|H(t+r) - H(t)\|^p\right)^{1/p} \le C_p \, r^{1/2}  \]
 for all $ p > 1 $ with some $p$-dependent constant $ C_p < \infty$. Since the resolvent is Lipschitz continuous with constant $\eta^{-2}$ in both $H$ and $z$, this implies
\[\sup_{|t-s|^2 + |z-w|^2 \le  r^2}  \|G(s, w) - G(t,z)\| \prec \eta^{-2} r^{1/2}.\]
Doob's inequality also implies that
\[\left( \ee  \left| \sup_{s \in [t, t+r]} |H_{kj}(s) - H_{kj}(t) \right|^p \right)^{1/p} \leq C_p \sqrt{\frac{r}{N}}\]
for any pair of indices $ j,k $. In combination with the Lipschitz continuity of $a$, this implies
\[\sup_{s \in [t, t+r]} |\sigma_{kj}(s) - \sigma_{kj}(t)| \le \sup_{s \in [t, t+r]}  \frac{C}{\sqrt{N}} \left| \frac{H_{kj}(s)}{\sqrt{s}} -  \frac{H_{kj}(t)}{\sqrt{t}}\right|  \prec  \frac{r^{1/2}}{N t_0} = N^{K-1} r^{1/2} \]
for any $t \geq t_0 $. Since $\scal[t, G(t,z)]$ is a polynomial combination of the $\sigma_{kj}(t)$ and $G(t,z)$, which are bounded by $ N^{-1} $ and $ \eta^{-1} \le N $ respectively, the bound~\eqref{eq:stochcont} follows.
\end{proof}

\section{Fluctuations along characteristic curves} \label{sec:characteristics}
We will show that, for fixed realizations of the randomness, the unique solutions of
\[\dot{\gamma}(t,z) = -\mean{G(t, \gamma(t,z))}, \qquad \gamma(0,z) = z\]
serve as approximate characteristic curves of the resolvent SDE~\eqref{eq:resolventsde}. 
We start these curves at spectral parameters $z$ in an initial spectral domain
\[ D_0 = (W_1 - 2\eta^{-1}, W_2 + 2\eta^{-1}) + i(\eta/2, 1+2\eta^{-1}) \setminus B_\delta(0),\]
where $\delta > 0$ is any constant satisfying
\beq{eq:deltadef} \frac{1}{2\delta} - 2\delta \geq 1 + \sup \{|z|: z \in D\}.
\eeq
Thus $G(0, z) = -z^{-1}$ is uniformly bounded and Lipschitz continuous in $D_0$. Given an initial point $z \in D_0$, we consider the process
\[R(t,z) = G(t \wedge \tau_z, \xi(t,z))\]
where
\[\xi(t,z) = \gamma(t \wedge \tau_z, z)\]
is the characteristic flow stopped at
\[\tau_z = \inf \{ t > 0: \im \xi(t,z) \le \eta/4\}.\]
The main observation is that $\mean{R(t,z)}$ is approximately constant.

\begin{theorem}\label{thm:maincharbound} The process $R(t,z)$ satisfies
\[ \sup_{t \le 1} \left| \mean{R(t,z)} - \mean{R(0, z)} \right| \prec \frac{1}{\sqrt{N \eta}}  \]
uniformly in $z \in D_0$.
\end{theorem}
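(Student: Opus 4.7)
My plan is to apply Itô's formula to the stopped process $R(t,z) = G(t\wedge\tau_z, \xi(t,z))$. Since $\xi$ is of bounded variation, the chain rule combined with the resolvent SDE~\eqref{eq:resolventsde} and $\partial_z G = G^2$ yields, before the stopping time,
\[dR = -G\,dH\,G \;+\; G\bigl(\scal[t, G] - \mean{R}\,I\bigr)\,G\,dt \;+\; G\,\tcal[t, G]\,G\,dt,\]
the scalar part of the self-energy drift having been canceled by the characteristic term $G^2\,\dot\xi\,dt = -\mean{R}\,G^2\,dt$. Taking $N^{-1}\tr$ yields a decomposition
\[d\mean{R(t,z)} = dM_t + \bigl(\mathcal{E}_{\scal}(t) + \mathcal{E}_{\tcal}(t)\bigr)\,dt\]
into a complex-valued martingale and two drift contributions, all evaluated along the characteristic.

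For the drift I would combine Theorem~\ref{thm:hoeffding} with the trace/operator-norm duality $|\tr(AB)|\le\|A\|\,\|B\|_1$ and the Ward identity $\|G^2\|_1 = \sum_k|\lambda_k - \xi|^{-2} = N\im\mean{R}/\im\xi$ to obtain
\[|\mathcal{E}_{\scal}(t)| \;\prec\; \frac{\im\mean{R}}{\im\xi}\,\sqrt{\frac{1+\im\mean{R}}{N\,\im\xi}}.\]
Because $\tcal[t, G]$ vanishes on the diagonal and has entries of size $\oh(N^{-1})$, a Cauchy-Schwarz estimate makes $|\mathcal{E}_{\tcal}(t)|\le C\im\mean{R}/(N(\im\xi)^2)$ strictly smaller. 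For the martingale part, $dM_t = -N^{-1}\tr(G^2\,dH)$; symmetry of $G^2$, the independence structure of $dH$ and $\sigma_{ij}\le CN^{-1}$ yield a quadratic variation
\[d[M]_t \;\le\; \frac{C}{N^3}\sum_{ij}|(G^2)_{ij}|^2\,dt \;\le\; \frac{C}{N^2}\cdot\frac{\im\mean{R}}{(\im\xi)^3}\,dt\]
by the spectral identity $\sum_{ij}|(G^2)_{ij}|^2 = \sum_k|\lambda_k-\xi|^{-4} \le N\im\mean{R}/(\im\xi)^3$.

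The key calculation is then to integrate these estimates along the characteristic. The substitution $-d(\im\xi) = \im\mean{R}\,dt$ coming from the imaginary part of $\dot\xi = -\mean{R}$ turns each occurrence of $\im\mean R\,dt$ into $-d(\im\xi)$, which, under the temporary assumption $\im\mean{R}\le C$, gives
\[\int_0^{t\wedge\tau_z}|\mathcal{E}_{\scal}(s)|\,ds \;\prec\; \frac{1}{\sqrt{N}}\int_{\im\xi(t\wedge\tau_z)}^{\im z}\frac{d(\im\xi)}{(\im\xi)^{3/2}} \;\lesssim\; \frac{1}{\sqrt{N\eta}},\]
while $[M]_{t\wedge\tau_z}\le C/(N\eta)^2$, so the Burkholder-Davis-Gundy inequality delivers $\sup_t|M_t|\prec 1/(N\eta)$, and the same substitution shows $\int|\mathcal{E}_{\tcal}|\,ds \le C/(N\eta)$. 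The dominant contribution is thus the drift $\mathcal{E}_{\scal}$, of the desired size $(N\eta)^{-1/2}$.

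The main obstacle is that this bound is self-referential: it relies on $\im\mean{R}\le C$, whereas only the deterministic estimate $\im\mean{R}\le(\im\xi)^{-1}\lesssim\eta^{-1}$ is available a priori, which is far too large. This is precisely what the Gr\"{o}nwall-type argument of Lemma~\ref{thm:counterterm} closes, equivalently a stopping-time bootstrap based on $\tau^{*} = \inf\{t: |\mean{R(t,z)}-\mean{R(0,z)}| > \varepsilon\}$ for a fixed small $\varepsilon > 0$. On $D_0$ we have $|\mean{R(0,z)}|=|z|^{-1}\le\delta^{-1}$, so the triangle inequality gives $\im\mean{R}\le\delta^{-1}+\varepsilon$ on $[0,\tau^{*}\wedge\tau_z]$; the computation above then forces $|\mean{R(\tau^{*},z)}-\mean{R(0,z)}|\prec(N\eta)^{-1/2}\ll\varepsilon$ once $N$ is large, so $\tau^{*}\ge 1\wedge\tau_z$ with overwhelming probability. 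Uniformity in $z\in D_0$ is obtained by the same net-plus-stochastic-continuity argument employed in the proof of Theorem~\ref{thm:hoeffding}.
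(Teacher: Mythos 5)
Your proposal is correct and follows essentially the same route as the paper: the same It\^{o} decomposition into a martingale, a $\tcal$-error and the self-energy drift, the same Ward-identity estimates, the same substitution $-d(\im\xi)=\im\mean{R}\,dt$ (the paper's ``integration trick''), and the same closure of the self-referential bound on $\im\mean{R}$ --- the paper does this via the Gr\"{o}nwall argument of Lemma~\ref{thm:counterterm}, and your stopping-time bootstrap is an equivalent variant of it. The only detail you gloss over is that Theorem~\ref{thm:hoeffding} is available only for $t\ge t_0=N^{-K}$, so the drift integral must be split at $t_0$ with the initial piece bounded trivially by $Ct_0\eta^{-3}$ (as the paper does); this is an easy fix, and note also that the uniformity in $z\in D_0$ asserted in the statement is pointwise stochastic domination, so the net argument you invoke at the end is not actually needed here (it is deferred to Theorem~\ref{thm:finaldom}).
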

\begin{proof}
Since $\xi$ defines a piecewise $C^1$-process, an application of It\^{o}'s lemma shows that $R(t,z)$ satisfies the same SDE as $G(t,z)$ but with an additional counter-term in the drift. More precisely, while $t \le \tau_z$,  the evolution consists of two terms
\[dR(t,z) = dF(t,z) + dA(t,z)\]
with
\begin{align*} dF(t,z) \ & = -R(t,z) \left( dH(t) - \tcal[t, R(t,z)] \, dt \right) R(t,z) , \\
dA(t,z) \ & = R(t,z) \left(\scal [t, R(t,z)] - \mean{R(t,z)} \right) R(t,z) \, dt .
\end{align*}
When dealing with the integrated versions of these processes, we will choose the initial conditions $F(0,z) = A(0,z) = 0$ so that $ \mean{R(t,z)} - \mean{R(0, z)} = \mean{F(t,z)} + \mean{A(t,z)} $. The proof of Theorem~\ref{thm:maincharbound} is then a direct consequence of the estimates on $F(t,z)$ and $A(t,z)$ provided in Lemma~\ref{thm:mgbounds} and Lemma~\ref{thm:counterterm}, respectively. 
\end{proof}

The proofs of the subsequent results will repeatedly use the crucial fact that any continuously differentiable function $f$ satisfies the identity
\begin{align*}\int_a^b \! f^\prime(\im \xi(s, z)) \, \mean{\im R(s,z)} \, ds &= -\int_a^b \! f^\prime(\im \xi(s, z)) \, d \left(\im \xi(s, z) \right)\\
& = f(\im \xi(a, z))-f(\im \xi(b, z))
\end{align*}
provided that $a, b \le \tau_z$. The term $f(\im \xi(a, z))-f(\im \xi(b, z))$ is then usually estimated by a trivial bound in terms of~$\eta$. We refer to these two steps simply as the ``integration trick''.

\begin{lemma}\label{thm:mgbounds} The process $F(t,z)$ satisfies
\[ \sup_{t \le 1} |\mean{F(t,z)}| \prec \frac{1}{N\eta} \]
uniformly in $z \in D_0$.
\end{lemma}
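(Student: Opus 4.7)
My plan is to decompose $\mean{F(t,z)}$ into a scalar martingale plus a drift and to bound each separately via the integration trick together with spectral identities for $R$ and $R^2$. Taking the trace of the matrix SDE for $F$ and dividing by $N$ yields
\[ d\mean{F(t,z)} = -\tfrac{1}{N}\tr\bigl(R^2 \, dH(t)\bigr) + \tfrac{1}{N} \tr\bigl(R^2\, \tcal[t, R]\bigr)\, dt, \]
with $R = R(t,z)$, and I write $M(t)$ for the integrated martingale term. Both subsequent estimates rest on the identities $\sum_{ij}|R_{ij}|^2 = N \mean{\im R}/\im \xi$ and $\sum_{ij}|(R^2)_{ij}|^2 \le N\mean{\im R}/(\im \xi)^3$, which follow from the simultaneous diagonalization of $R$ and $R^*$ (valid since $H$ is self-adjoint) together with the elementary bound $|\lambda_k - \xi|^{-4} \le (\im \xi)^{-2}|\lambda_k - \xi|^{-2}$.

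For the drift, the formula $\tcal[t,R]_{ji} = (1-\delta_{ij})\sigma_{ij} R_{ij}$, Cauchy--Schwarz, and $\sigma_{ij} \le CN^{-1}$ from Assumption~\ref{assump} combine to give
\[ \bigl|\tfrac{1}{N}\tr(R^2\, \tcal[t,R])\bigr| \le \frac{C}{N^2}\Bigl(\sum_{ij}|(R^2)_{ij}|^2\Bigr)^{1/2}\Bigl(\sum_{ij}|R_{ij}|^2\Bigr)^{1/2} \le \frac{C \mean{\im R}}{N(\im \xi)^2}. \]
The integration trick with $f(y) = -1/y$ then bounds the time integral of this drift by $C/(N\eta)$ deterministically (on the event that the flow is well-defined).

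For the martingale, I will compute the quadratic variation using the covariation structure $d[H_{ij},H_{kl}]/dt = \sigma_{ij}(\delta_{ik}\delta_{jl} + (1-\delta_{ij})\delta_{il}\delta_{jk})$ and the symmetry $(R^2)_{ij} = (R^2)_{ji}$, which yields
\[ \frac{d[M,\bar M](t)}{dt} \le \frac{2}{N^2}\sum_{ij}\sigma_{ij}|(R^2)_{ij}|^2 \le \frac{C\mean{\im R}}{N^2 (\im \xi)^3}. \]
The integration trick with $f(y) = 1/(2y^2)$ produces $[M,\bar M](t) \le C/(N\eta)^2$ for every $t \le 1$, and an application of the Burkholder--Davis--Gundy inequality together with Markov's inequality then yields $\sup_{t\le 1}|M(t)| \prec 1/(N\eta)$.

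Finally, uniformity in $z \in D_0$ follows from a grid argument along the lines of the proof of Theorem~\ref{thm:hoeffding}: the resolvent is $\eta^{-2}$-Lipschitz in $z$, and the characteristic flow depends continuously on $z$, so a polynomial-sized net together with a union bound reduces the supremum in $z$ to the pointwise estimate. The main technical delicacy is arranging the bounds sharply enough to reach the rate $1/(N\eta)$; the extra power of $R$ appearing in $R^2$ produces the extra $(\im \xi)^{-2}$ factor visible in both displays above, and this factor must be absorbed by the stronger choice $f(y) = 1/y^2$ in the integration trick, rather than by the weaker $f(y) = 1/y$ that is enough for the drift alone.
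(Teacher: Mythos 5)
Your proof is correct and follows essentially the same route as the paper: you split $\mean{F(t,z)}$ into its martingale part and the $\tcal$-drift, bound the drift deterministically using $\sigma_{ij}\le CN^{-1}$, the Ward-type identities for $R$ and $R^2$, and the integration trick, and control the martingale through its quadratic variation, the integration trick, and Burkholder--Davis--Gundy plus Markov, exactly as in the paper. The closing grid argument is superfluous, since uniformity in $z\in D_0$ in the sense of $\prec$ only asks that the probability bound hold uniformly over $z$, which your pointwise estimates (with constants independent of $z$) already provide.
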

\begin{proof} The martingale part of $F(t,z)$ is
\[dM(t) = -R(t,z) \, dH(t) R(t,z)\]
and the quadratic variation of its unit trace $\mean{M}$ is given by
\[d\left[ \mean{M} \right](t) =  {\mean{R^\ast(t,z) \, dH(t) \, R^\ast(t,z)}} \, \mean{R(t,z) \, dH(t) \, R(t,z)} \le \frac{C}{N^2} \|\sqrt{\sigma}(t) \odot R^2(t,z)\|_2^2 \, dt,\]
where $ \| \cdot \|_2 = \sqrt{ \tr | \cdot |^2 } $ denotes the Hilbert-Schmidt norm, $\odot$ denotes the entrywise product, and $ \sqrt{\sigma}(t) $ is the matrix with entries $ \sqrt{\sigma_{jk}(t) } $.  If $t \le \tau_z$, we conclude that
\begin{align*} 
[\mean{M}](t) & \le \frac{C}{N^2} \int_0^t \!  \|\sqrt{\sigma}(s) \odot R^2(s,z)\|_2^2 \, ds\\
&\le \frac{C}{N^3}  \int_0^t \!  \frac{ \tr |R|^2(s,z)}{ (\im \xi(s,z))^2}  \, ds\\
&\le  \frac{C}{N^2} \int_0^t \!  \frac{\mean{\im R(s,z)}}{(\im \xi(s, z))^3} \, ds = \frac{C}{N^2} \left(\frac{1}{ (\im \xi(t, z))^2} -  \frac{1}{ (\im \xi(0, z))^2} \right)
\end{align*}
by the integration trick. Thus
\[\sup_{t \le \tau_z} |\mean{M(t)}| \prec (N\eta)^{-1}\]
by the Burkholder-Davis-Gundy inequality and Markov's inequality. The other term in $F(t,z)$,
\[\sup_{t \le \tau_z} \left|  \int_0^t \!  \mean{ R(s,z) \tcal[s, R(s,z)] R(s,z)} \, ds \right|  \le \frac{C}{N} \int_0^{\tau_z} \! \frac{  \mean{|R(s,z)|^2}}{\im \xi(s,z)} \, ds =  \frac{C}{N} \int_0^{\tau_z} \!  \frac{\mean{\im R(s)}}{(\im \xi(s, z))^2} \, ds,\]
is also stochastically dominated by $(N\eta)^{-1}$ uniformly in $ z \in D_0$ because of the integration trick.
\end{proof}

\begin{lemma}\label{thm:counterterm} The process $A(t,z)$ satisfies
\[ \sup_{t \le 1} | \mean{A(t,z)}| \prec \frac{1}{\sqrt{N\eta}}\]
uniformly in $z \in D_0$.
\end{lemma}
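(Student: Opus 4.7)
The plan is to bound the drift of $A(t,z)$ pointwise in $s$ using Theorem~\ref{thm:hoeffding}, then to integrate along the characteristic using the integration trick, and finally to close the resulting self-consistent inequality via a continuity-in-$t$ argument.

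First, I would rewrite
\[\mean{A(t,z)} = \int_0^{t\wedge \tau_z}\! \mean{R(s,z)\bigl(\scal[s, R(s,z)] - \mean{R(s,z)}\bigr) R(s,z)}\, ds\]
and estimate the integrand via $|\mean{RBR}| \le N^{-1}\|B\|\,\|R\|_2^2$. Using the resolvent identity $\|G(\zeta)\|_2^2 = N\,\mean{\im G(\zeta)}/\im \zeta$ at $\zeta=\xi(s,z)\in D^\prime$ together with Theorem~\ref{thm:hoeffding} yields, uniformly in $s\in [t_0, 1]$,
\[\bigl|\mean{R(s,z)(\scal[s,R(s,z)]-\mean{R(s,z)})R(s,z)}\bigr| \prec \frac{\mean{\im R(s,z)}}{\im \xi(s,z)} \sqrt{\frac{1+\mean{\im R(s,z)}}{N\,\im \xi(s,z)}}.\]
The piece of the integral over $[0,t_0]$ is controlled deterministically by $\|R\|\le 4\eta^{-1}$ (valid up to $\tau_z$), contributing at most $C N^{-K}\eta^{-3}$, which is of smaller order than $(N\eta)^{-1/2}$ provided $K$ is chosen large enough.

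Second, I would split $\sqrt{1+\mean{\im R}}\le 1+\sqrt{\mean{\im R}}$. The integration trick, based on $-\tfrac{d}{ds}\im \xi(s,z)=\mean{\im R(s,z)}$, turns $\int \mean{\im R}(\im\xi)^{-3/2}\,ds$ into at most $2/\sqrt{\im\xi(t\wedge \tau_z)}\le 4/\sqrt{\eta}$. Factoring $\sup_{s\le t}\sqrt{\mean{\im R(s,z)}}$ out of the cubic-in-$\mean{\im R}$ term and applying the same trick to the remaining factor gives the preliminary bound
\[|\mean{A(t,z)}| \prec \frac{1+\sqrt{\sup_{s\le t\wedge\tau_z}\mean{\im R(s,z)}}}{\sqrt{N\eta}}.\]

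The main obstacle is to close this self-consistent inequality. From the decomposition $\mean{R(s,z)}=\mean{R(0,z)}+\mean{F(s,z)}+\mean{A(s,z)}$ with $|\mean{R(0,z)}|=|z|^{-1}\le \delta^{-1}$ and Lemma~\ref{thm:mgbounds}, on an overwhelming-probability event one has
\[\sup_{s\le t}\mean{\im R(s,z)}\le C + \sup_{s\le t}|\mean{A(s,z)}|,\]
so $\Psi(t):=\sup_{s\le t\wedge\tau_z}|\mean{A(s,z)}|$ satisfies $\Psi(t)\prec (1+\sqrt{\Psi(t)})/\sqrt{N\eta}$. To close the loop I would fix $\varepsilon,p>0$, work on the event of probability $\ge 1-N^{-p}$ where all implicit $\prec$ constants are $N^\varepsilon$, and introduce the stopping time $\sigma_z=\inf\{s\le 1:|\mean{A(s,z)}|>1\}$. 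On $[0,\sigma_z]$ one has $\Psi\le 1$, and the inequality becomes $\Psi(\sigma_z)\le CN^\varepsilon/\sqrt{N\eta}\ll 1$ for $N$ large and $\varepsilon<\theta/2$. Together with $\mean{A(0,z)}=0$ and the continuity of $\mean{A(\cdot,z)}$ this forces $\sigma_z=1$, yielding $\Psi(1)\le CN^\varepsilon/\sqrt{N\eta}$. Finally, uniformity in $z\in D_0$ is obtained by a standard $z$-grid union bound combined with the Lipschitz continuity of the resolvent in $z$.
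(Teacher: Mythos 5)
Your proposal is correct, and its first half is exactly the paper's argument: split the time integral at $t_0$, bound the $[0,t_0]$ piece deterministically by $Ct_0\eta^{-3}$ with $K$ large, apply Theorem~\ref{thm:hoeffding} along the characteristic together with the Ward identity $\mean{|R|^2}=\mean{\im R}/\im\xi$, and reduce the $ds$-integrals via the integration trick, arriving at the analogue of~\eqref{eq:cbound}. Where you genuinely diverge is in how the self-consistent estimate is closed. The paper keeps the factor $u(s)=1+\im\mean{R(s,z)}$ inside the integral (using $\sqrt{u}\le u$), feeds the resulting bound on $\mean{A}$ back into an integral inequality for $u(t)$ itself, and applies Gr\"onwall; the integration trick makes the exponent $O(1)$ because $N^{\theta/2}\cdot 4(N\eta)^{-1/2}\le 4$, giving $\sup_t u(t)\prec 1$, which is then reinserted into~\eqref{eq:cbound}. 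You instead pull out $\sup_s\sqrt{\mean{\im R(s,z)}}$, use Lemma~\ref{thm:mgbounds} and $|\mean{R(0,z)}|\le\delta^{-1}$ to get the scalar inequality $\Psi\le a+b\bigl(1+\sqrt{C+\Psi}\bigr)$ for $\Psi(t)=\sup_{s\le t\wedge\tau_z}|\mean{A(s,z)}|$ with $b\sim N^{\varepsilon}(N\eta)^{-1/2}$, and close it by a continuity/first-exceedance argument in $t$. Both closures hinge on the same cancellation $N^{\varepsilon}(N\eta)^{-1/2}=N^{\varepsilon-\theta/2}\ll1$ and both work; in fact your stopping time $\sigma_z$ is dispensable, since $\Psi(1)\le C\eta^{-3}$ is finite a priori and $b\sqrt{\Psi}\le\tfrac12 b^{2}+\tfrac12\Psi$ absorbs the square root directly, so your route is, if anything, slightly more elementary than Gr\"onwall. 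Two minor remarks: the closing sentence about a $z$-grid is unnecessary here, because the lemma only asserts stochastic domination with constants uniform in $z\in D_0$, which your fixed-$z$ argument already yields (a genuine supremum over $z$ requires comparing different characteristics and is deferred to Theorem~\ref{thm:finaldom} via~\eqref{eq:gronwall}, not just resolvent Lipschitz continuity in $z$); and, as in the paper, one should record the short check that $\xi(s,z)\in D^{\prime}$ for all $s\le\tau_z$ so that Theorem~\ref{thm:hoeffding} is applicable along the stopped characteristic.
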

\begin{proof} We will split the integral
\beq{eq:intsplit} | \mean{A(t,z)}| \le \left( \int_0^{t_0} + \int_{t_0}^t  \right) \! \left| \mean{ R(s,z) \left(\scal [s, R(s,z)] - \mean{R(s,z)} \right) R(s,z) } \right|\, ds
\eeq
at the point $t_0$ from~\eqref{eq:t0def}. By the trivial bound on the resolvent, the characteristic $\xi(t,z)$ started at $z \in D_0$ remains in the domain $D^\prime$ from~\eqref{eq:dprimedef} for all $t \le 1$. Setting
\[u(t) = 1 + \im \mean{R(t, z)},\]
Theorem~\ref{thm:hoeffding} shows that the second part of the integral in~\eqref{eq:intsplit} is bounded by 
\beq{eq:cbound} N^{\theta/2}   \int_{t_0}^t \!  \frac{ \mean{ | R(s,z)|^2 } } {\sqrt{N \, \im \xi(s,z)}}  \sqrt{u(s)}\, ds 
  \le    \frac{ N^{\theta/2}}{\sqrt{N} } \int_{0}^t \! \frac{\im \mean{ R(s,z)}}{(\im \xi(s,z))^{3/2}} \, u(s)\, ds
\eeq
with probability $ 1 - N^{-p} $ for arbitrary $p > 0$ and large enough $N$. The first part of the integral in~\eqref{eq:intsplit} can be bounded by $C t_0  \eta^{-3}$ using the trivial bound on the operator norm of the resolvent. So choosing the cutoff exponent $K$ in~\eqref{eq:t0def} large enough and using Lemma~\ref{thm:mgbounds} yields
\[ u(t)  \leq 1 + u(0)  + N^{\frac{\theta}{2}}  \int_{0}^t \! \frac{\im \mean{ R(s,z)}}{(\im \xi(s,z))^{3/2}} \, u(s)\, ds\]
on an event that also has probability $ 1 - N^{-p} $ for arbitrary $p > 0$ and large enough $N$. On this event, Gr\"{o}nwall's inequality implies 
\[u(t)  \leq \left( 1 + u(0) \right) \exp\left(N^{\frac{\theta}{2}}  \int_0^t \!  \frac{\im \mean{R(s, z)}}{N^{1/2} \, (\im \xi(s,z))^{3/2}}\, ds \right)\]
and the integral inside the exponential is bounded by $ 4 (N\eta)^{-1/2} = 4N^{-\theta/2}$ because of the integration trick. Since $\mean{R(0,z)} \le \delta^{-1}$ for $z \in D_0$, we have shown that
\[\sup_{t \le 1} u(t) \prec 1.\]
We now insert this bound on $u$ back into the integral in~\eqref{eq:cbound}.  Choosing the cutoff exponent $K$ in~\eqref{eq:t0def} large enough yields
 \[ \sup_{t \le 1} | \mean{A(t,z)}| \prec \frac{1}{N\eta} +  \frac{1}{\sqrt{N}} \int_{0}^{1} \! \frac{\im \mean{ R(s,z)}}{(\im \xi(s,z))^{3/2}} \, ds \prec \frac{1}{\sqrt{N\eta}}\]
via the integration trick.
\end{proof}

Having established Theorem~\ref{thm:maincharbound}, we now argue that the stochastic domination holds simultaneously for a continuum of points using a discretization argument.

\begin{theorem}\label{thm:finaldom} Let $\tau_z^\prime = \inf \{t > 0: \im \gamma(t,z)  \le \eta/2\} $.
Then
\beq{eq:finaldom} \sup_{z \in D_0 } \sup_{t \le 1 \wedge \tau_z^\prime} |\mean{R(t, z)} - \mean{R(0,z)}| \prec \frac{1}{\sqrt{N \eta}}.
\eeq
\end{theorem}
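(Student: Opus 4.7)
The plan is to upgrade the pointwise estimate of Theorem~\ref{thm:maincharbound} to a uniform bound over $z \in D_0$ via a net argument, in the same spirit as the passage from~\eqref{eq:singlecounter} to the full conclusion of Theorem~\ref{thm:hoeffding}. Fix a large integer $L$ (to be chosen) and a grid $\Lambda \subset D_0$ of mesh $N^{-L}$ and cardinality $|\Lambda| \le N^{O(L)}$. Applying Theorem~\ref{thm:maincharbound} at each $z_0 \in \Lambda$ and taking the union bound gives
\[ \sup_{z_0 \in \Lambda} \sup_{t \le 1} \left|\mean{R(t, z_0)} - \mean{R(0, z_0)}\right| \prec \frac{1}{\sqrt{N\eta}}. \]
What remains is a deterministic stochastic-continuity estimate: on the event $\{t \le 1\wedge \tau_z^\prime\}$, the map $z \mapsto \mean{R(t, z)}$ should be Lipschitz with constant at most $N^{C^\prime}$ for some absolute $C^\prime$.

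The crucial step is to control the $z$-derivative of the characteristic flow. Since $\partial_w \mean{G(s, w)} = \mean{G(s, w)^2}$, differentiating the defining ODE $\dot\gamma = -\mean{G(s, \gamma)}$ in $z$ and integrating the resulting linear equation for $\partial_z \gamma$ yields
\[ \partial_z \gamma(t, z) \ = \ \exp\!\left( -\int_0^t \mean{G(s, \gamma(s, z))^2} \, ds\right).\]
The Ward identity gives $|\mean{G(s, w)^2}| \le \mean{|G(s, w)|^2} = \im \mean{G(s,w)}/\im w$, so the integration trick applied to $f = \log$ produces the deterministic bound
\[ \left|\int_0^t \mean{G(s, \gamma(s, z))^2} \, ds\right| \le \int_0^t \! \frac{\im \mean{G(s, \gamma(s,z))}}{\im \gamma(s,z)}\, ds \ = \ \log \frac{\im z}{\im \gamma(t, z)} \ \le \ C\log N \]
whenever $t \le \tau_z^\prime$, since then $\im \gamma(t, z) \ge \eta/2$ and $\im z \le 1 + 2\eta^{-1}$ on $D_0$. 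Consequently $|\partial_z \gamma(t,z)| \le N^C$, and because $w \mapsto \mean{G(t, w)}$ is itself Lipschitz in $w$ with constant $(\im w)^{-2} \le 16\eta^{-2}$ on $\{\im w \ge \eta/4\}$, the composition $z \mapsto \mean{G(t, \gamma(t, z))}$ is Lipschitz with constant $N^{C^\prime}$ for some fixed $C^\prime$.

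To close the argument, pick any $z \in D_0$ and let $z_0 \in \Lambda$ be a nearest grid point. Choosing $L$ so large that $N^{C - L} \le \eta/4$ guarantees $\im \gamma(t, z_0) \ge \im \gamma(t, z) - N^{C-L} \ge \eta/4$ whenever $t \le \tau_z^\prime$, so that $t \le \tau_{z_0}$ and hence $\xi(s, z_0) = \gamma(s, z_0)$, $\xi(s, z) = \gamma(s, z)$ throughout $[0, t]$. The Lipschitz bound then yields $|\mean{R(t,z)} - \mean{R(t, z_0)}| \le N^{C^\prime - L}$, which can be made smaller than any negative power of $N$ by increasing $L$; the corresponding bound at $t = 0$ follows trivially from $\mean{R(0, \cdot)} = -(\cdot)^{-1}$. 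Combining these deterministic estimates with the union-bounded statement on $\Lambda$ yields~\eqref{eq:finaldom}. The genuinely delicate ingredient is the polynomial bound on $|\partial_z \gamma|$; once the integration trick provides this, the remaining discretization is routine.
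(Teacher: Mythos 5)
Your proposal is correct and takes essentially the same route as the paper: a polynomial-size net in $D_0$, Theorem~\ref{thm:maincharbound} plus a union bound on the net, a polynomially bounded Lipschitz estimate for $z \mapsto \gamma(t,z)$ coming from the Ward identity and the integration trick, and the observation that $t \le \tau_z^\prime$ forces the nearby grid characteristic to stay above $\eta/4$, so the stopped processes coincide with the unstopped ones. The paper obtains the flow's Lipschitz bound by a Gr\"onwall argument on the difference of two characteristics, yielding $|\gamma(t,z)-\gamma(t,w)| \le \sqrt{\im z \, \im w/(\im \gamma(t,z)\, \im \gamma(t,w))}\,|z-w|$, which is precisely the integrated form of your variational-equation bound $|\partial_z \gamma(t,z)| \le \im z/\im \gamma(t,z)$; the only point to tighten in your write-up is that the mean-value step requires the derivative bound at the intermediate points of the segment $[z,z_0]$ as well, which is settled by a routine continuity (bootstrap in $t$) argument analogous to the endpoint case you treat.
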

\begin{proof} 
We begin by proving a Lipschitz bound for the characteristic flow  $\gamma(t,\cdot)$.
Fix $z, w \in \cp$ and let
\[u(t) = \gamma(t, z) - \gamma(t, w).\]
As long as $ \im \gamma(t, z) , \im \gamma(t, w)> \eta/4$, we have
\begin{align*} |u(t)|  &\le |u(0)| +  \int_0^t \! |  \mean{ R(s, z) - R(s,w)} | \,ds\\
&\le   |u(0)| + \frac{1}{N}  \int_0^t \! |u(s)|  \|R(s, z)\|_2 \| R(s,w)\|_2 \,ds\\
&\le |u(0)| +   \int_0^t \! \frac{ |u(s)| }{2}  \left( \frac{\mean{\im R(s, z)}}{\im\gamma(s,z)} +  \frac{\mean{\im R(s, w)}}{\im\gamma(s,w)} \right) \,ds
\end{align*}
so Gr\"{o}nwall's inequality and the integration trick yield
\[\log \frac{|u(t)| }{|u(0)| } \le   \frac{1}{2}  \int_0^t \! \left( \frac{\mean{\im R(s, z)}}{\im \gamma(s,z)} +  \frac{\mean{\im R(s, w)}}{\im \gamma(s,w)} \right) \,ds = \frac{1}{2} \log \left(\frac{\im z \, \im w } { \im \gamma(t,z)\,   \im \gamma(t, w)}\right). \]
Rearranging, we get
\beq{eq:gronwall}\left| \gamma(t,z) - \gamma(t,w) \right| \leq \sqrt{\frac{\im z \, \im w } { \im \gamma(t,z)\,   \im \gamma(t, w)}} \, | z-w |.\eeq

To prove the theorem, we choose a finite grid $\Lambda \subset D_0$ such that its cardinality is bounded by $ |\Lambda| \le N^L$ for some $L \in \nn$ and
\[\operatorname{dist}(D_0, \Lambda) \le \frac{\eta^4}{\sqrt{N\eta}}.\]
From Theorem~\ref{thm:maincharbound} and the union bound, we have
\beq{eq:initialdom} \sup_{z \in \Lambda } \sup_{t \le 1} |\mean{R(t, z)} - \mean{R(0,z)}| \prec \frac{1}{\sqrt{N \eta}},
\eeq
so it suffices to show that the left side of~\eqref{eq:initialdom} controls the left side of~\eqref{eq:finaldom}. Given $z \in D_0$, we pick $w \in \Lambda$ such that $|z - w| \le \frac{\eta^3}{\sqrt{N\eta}}$. Then $\tau_w^\prime \le \tau_w$  and $\tau_{z}^\prime \le \tau_w$ since~\eqref{eq:gronwall} guarantees that
\[ |\gamma(t,z) - \gamma(t, w)| \le C \frac{\eta^{2}}{\sqrt{N\eta}}\]
as long as $\im \gamma(t,z) \geq \eta/4$ and  $\im \gamma(t,w) \geq \eta/4$. The trivial $C\eta^{-2}$-Lipschitz continuity of the resolvent in $D_0$ then shows that the process $\mean{R(t, z)}$ stays within an error $C/\sqrt{N\eta}$ of $\mean{R(t,w)}$ for all times $t \le \tau_z^\prime$.
\end{proof}

\section{Proof of the local semicircle law}\label{sec:lsc}
To prove Theorem~\ref{thm:lsc} we choose an arbitrarily small $ \varepsilon > 0 $ and prove that the local semicircle law is valid on the event
\[\mathcal{A}_\varepsilon = \left\{  \sup_{z \in D_0 } \sup_{t \le 1 \wedge \tau_z^\prime} |\mean{R(t, z)} - \mean{R(0,z)}| \le \frac{N^\varepsilon}{\sqrt{N \eta}} \right\}\]
with an error that is also of order $N^\varepsilon/\sqrt{N\eta}$. By Theorem~\ref{thm:finaldom} the event $\mathcal{A}_\varepsilon$ has probability $ 1 - N^{-p} $ for arbitrary $ p > 0 $ when $N$ is large enough. We begin by noting that the characteristic flow may be computed explicitly on $\mathcal{A}_\varepsilon$.

\begin{lemma}\label{thm:charcontrol} Let $ \varepsilon > 0 $ and suppose $\mathcal{A}_\varepsilon$ occurs. Then for every $z \in D$, there exists $w \in D_0$ such that $z = \gamma(1, w)$ and
\[ \left| w + \frac{1}{w} - z \right| \le \frac{CN^\varepsilon}{\sqrt{N\eta}}.\]
\end{lemma}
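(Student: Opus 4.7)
The plan is to construct $w$ by running the characteristic ODE \emph{backward} from $z$, exploiting the monotonicity of the imaginary part of the flow. This bypasses any fixed-point or topological existence argument: once $\gamma(1, w) = z$ is obtained by time-reversal, the condition $\tau_w^\prime > 1$ comes for free from monotonicity, and the $\mathcal{A}_\varepsilon$-estimate then applies throughout $[0,1]$ without any bootstrap.

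Given $z \in D$, I would solve the backward ODE $\dot\beta(s) = \mean{G(1-s, \beta(s))}$ on $[0,1]$ with $\beta(0) = z$. Because $\mean{G(1-s, \cdot)}$ is the Stieltjes transform of a non-negative measure, $\im \mean{G(1-s, \beta)} \ge 0$ for $\beta \in \cp$, so $\im \beta$ is non-decreasing and $\im \beta(s) \ge \eta$ throughout. On the region $\{\im \ge \eta\}$ the right-hand side is Lipschitz with constant $\eta^{-2}$, so the ODE admits a unique global solution, and I set $w := \beta(1)$. The time-reversed curve $t \mapsto \beta(1-t)$ then solves the forward ODE $\dot\gamma = -\mean{G(t, \gamma)}$ with initial condition $\beta(1) = w$, so by uniqueness $\gamma(t, w) = \beta(1-t)$ on $[0,1]$; in particular $\gamma(1, w) = \beta(0) = z$, and $\im \gamma(t, w) = \im \beta(1-t) \ge \eta > \eta/2$, giving $\tau_w^\prime > 1$. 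The trivial bound $|\dot \beta| \le \eta^{-1}$ yields $|w - z| \le \eta^{-1}$, which places $w$ in the rectangular part of $D_0$; the remaining condition $|w| \ge \delta$ will follow from the estimate derived in the next step.

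On the event $\mathcal{A}_\varepsilon$ and with $\tau_w^\prime > 1$, the definition of $\mathcal{A}_\varepsilon$ and the identity $\mean{R(0, w)} = -1/w$ yield $|\mean{R(t, w)} + 1/w| \le N^\varepsilon/\sqrt{N\eta}$ for all $t \in [0,1]$. Since $\tau_w \ge \tau_w^\prime > 1$, the identification $R(t, w) = G(t, \gamma(t, w))$ holds on $[0,1]$, so $\dot\gamma(t, w) = -\mean{R(t, w)}$. Integrating,
\[ z = \gamma(1, w) = w + \frac{1}{w} - \int_0^1 \bigl(\mean{R(t, w)} - \mean{R(0, w)}\bigr) \, dt, \]
which gives $|w + 1/w - z| \le N^\varepsilon/\sqrt{N\eta}$. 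For the deferred $B_\delta(0)$ check: $|w| < \delta$ would force $|w + 1/w| \ge \delta^{-1} - \delta > 1 + \sup_{z \in D}|z|$ by~\eqref{eq:deltadef}, contradicting $|w + 1/w - z| \le N^\varepsilon/\sqrt{N\eta} < 1$ for $N$ large (taking $\varepsilon$ smaller than $\theta/2$, which is allowed since the conclusion only strengthens with smaller $\varepsilon$). The proof has no single technical obstacle; its essence is simply the choice to run the flow backward, so that both the existence of $w$ and the bound $\tau_w^\prime > 1$ drop out of monotonicity at once.
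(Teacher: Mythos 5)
Your construction of $w$ by running the flow backward from $z$ is exactly the paper's time-reversal $\lambda$, and most of your steps match the paper's proof: monotonicity of the imaginary part along the backward flow gives $\tau_w^\prime>1$, the trivial bound $|\dot\beta|\le\eta^{-1}$ places $w$ in the rectangular part of $D_0\cup B_\delta(0)$, and the integrated identity $\gamma(1,w)=w+\tfrac1w-\int_0^1\bigl(\mean{R(s,w)}-\mean{R(0,w)}\bigr)\,ds$ is the paper's~\eqref{eq:charformula}. The genuine gap is the deferred check that $w\notin B_\delta(0)$, and as written it is circular: the event $\mathcal{A}_\varepsilon$ is a supremum over starting points in $D_0$ only, so the estimate $|\mean{R(t,w)}+1/w|\le N^\varepsilon/\sqrt{N\eta}$ --- and hence your bound $|w+1/w-z|\le N^\varepsilon/\sqrt{N\eta}$ --- is available only once you already know $w\in D_0$, i.e.\ $|w|\ge\delta$; you then use that very bound to exclude $|w|<\delta$. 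Nothing in the backward-flow construction rules out $w\in B_\delta(0)$ a priori: the drift is only bounded by $\eta^{-1}$, so $w$ may be far from $z$, and neither $\mathcal{A}_\varepsilon$ nor Theorem~\ref{thm:finaldom} says anything about trajectories started inside $B_\delta(0)$, which is excluded from $D_0$ precisely because $G(0,\cdot)=-(\cdot)^{-1}$ is unbounded there. (A second, smaller slip: since $\mathcal{A}_{\varepsilon^\prime}\subset\mathcal{A}_\varepsilon$ for $\varepsilon^\prime<\varepsilon$, you cannot ``take $\varepsilon$ smaller'' to make $N^\varepsilon/\sqrt{N\eta}<1$; the hypothesis of the lemma is the event with the given $\varepsilon$.)

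The paper closes exactly this gap with a topological argument that your proposal omits: $\lambda(1,\cdot)$ is a homeomorphism, so $\lambda(1,D)$ is connected and contains points of modulus larger than $2\delta$ (images of points with $\im z$ close to $1$, since $\im\lambda(1,z)\ge\im z$). If $\lambda(1,D)$ also met $B_\delta(0)$, connectedness would produce a point $\zeta^\prime\in\lambda(1,D)$ with $|\zeta^\prime|=2\delta$; such a $\zeta^\prime$ \emph{does} lie in $D_0$, so the $\mathcal{A}_\varepsilon$ estimate may legitimately be applied to the trajectory started at $\zeta^\prime$, yielding $|\gamma(1,\zeta^\prime)|\ge\tfrac{1}{2\delta}-2\delta-o(1)\ge 1+\sup_{z\in D}|z|-o(1)$ by~\eqref{eq:deltadef}, hence $\gamma(1,\zeta^\prime)\notin D$, contradicting $\zeta^\prime\in\lambda(1,D)$. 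Some argument of this type --- invoking the event only at points already known to belong to $D_0$ --- is needed to legitimize the final estimate at $w$; your direct application of $\mathcal{A}_\varepsilon$ at $w$ cannot be salvaged as written.
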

\begin{proof}
Let $\lambda$ be the time-reversal of $\gamma$ defined by
\[\dot{\lambda}(t, \zeta) = \mean{G(1-t, \lambda(t, \zeta))}, \qquad \lambda(0, \zeta) = \zeta\]
so that $\gamma(1, \lambda(1,\zeta)) = \zeta$. Whenever $\zeta \in D$ and $\lambda(1, \zeta) \in D_0$, we necessarily have $\tau_\zeta > 1$, so 
\beq{eq:charformula} 
\gamma(1, \zeta)  = \zeta - \mean{R(0,\zeta)} -  \int_0^1 \!\left(  \mean{R(s,\zeta)} - \mean{R(0,\zeta)}\right)  \, ds = \zeta + \zeta^{-1} + \oh\left(\frac{N^\varepsilon}{\sqrt{N\eta}}\right)
\eeq
by the definition of $\mathcal{A}_\varepsilon$. The desired $w$ in the conclusion of the lemma will be
\[w = \lambda(1, z),\]
which satisfies $ w \in D_0 \cup B_\delta(0) $
by the trivial resolvent bound. It thus remains to prove that $w \notin B_\delta(0)$. For this, we note that $\lambda(1, D)$ is simply connected since $\lambda(1, \cdot)$ is a homeomorphism. If there were any point $\zeta \in \lambda(1, D) \cap B_\delta(0)$, there would also be some point $\zeta^\prime \in \lambda(1, D) \cap D_0$ with $|\zeta^\prime| = 2\delta$. Since $\zeta^\prime \in D_0$, the relation~\eqref{eq:charformula} implies
\[ | \gamma(1, \zeta^\prime) | + \oh\left(\frac{N^\varepsilon}{\sqrt{N\eta}}\right) \geq \left|\zeta^\prime   + \frac{1}{\zeta^\prime} \right| \geq \frac{1}{2\delta} - 2 \delta,  \] 
which leads to the contradiction $\gamma(1, \zeta^\prime) \notin D$ by~\eqref{eq:deltadef}.
\end{proof}

Before proving Theorem~\ref{thm:lsc}, we mention that the relation
\beq{eq:semicircleeq}w^\prime + \frac{1}{w^\prime} = z, \qquad w^\prime \in \cp
\eeq
is equivalent to $-1/w^\prime = m_{sc}(z)$. Since the semicircle law is analytic in the bulk interval $W$, its Stieltjes transform $m_{sc}$ is Lipschitz continuous in $D$ with a constant independent of $N$.

\begin{proof}[Proof of Theorem~\ref{thm:lsc}]
It suffices to prove that
\[  \sup_{z\in D} \left| \mean{G(1, z)} - m_{sc}(z) \right| \leq \oh\left(\frac{N^\varepsilon}{\sqrt{N\eta}} \right) \]
on the event $\mathcal{A}_\varepsilon$ for all $\varepsilon <  \theta/2 $. Let $z \in D$, let $w = w(z) \in D_0$ be the point furnished by Lemma~\ref{thm:charcontrol}, and let $w^\prime = - m_{sc}(z)^{-1} $ be the solution of \eqref{eq:semicircleeq}. By Lemma~\ref{thm:charcontrol} we have $ w + w^{-1} \in D $ for all sufficiently large $ N $, so the Lipschitz continuous dependence of $1/w^\prime$ on $z \in D$ implies
\[ \left|\frac{1}{w} - \frac{1}{w^\prime} \right| = \oh\left(\frac{N^\varepsilon}{\sqrt{N\eta}} \right). \]
On the event $\mathcal{A}_\varepsilon$ the bound
\begin{align*}
\left| \mean{G(1, z)} - m_{sc}(z) \right| \ &  \leq \left| \mean{G(1, z)} -  \mean{G(0,w)} \right| +  \left|\frac{1}{w} - \frac{1}{w^\prime} \right| \\
& \leq  \sup_{t \le 1 \wedge \tau_w^\prime} |\mean{R(t, w)} - \mean{R(0,w)}| + \oh\left(\frac{N^\varepsilon}{\sqrt{N\eta}} \right)
 \leq \oh\left(\frac{N^\varepsilon}{\sqrt{N\eta}} \right) 
\end{align*}
is valid with a uniform constant for all $z \in D$.
\end{proof}

\minisec{Acknowledgments}
We are grateful to two anonymous referees for their helpful suggestions. We would also like to thank Institut Mittag-Leffler for their hospitality during the early stages of this project. This work was supported by the DFG grants SO 1724/1-1 (P.\,S.) and EXC-2111 -- 390814868 (S.\,W.).

\bibliographystyle{abbrv}
\bibliography{References}
\bigskip
\bigskip
\begin{minipage}{0.5\linewidth}
\noindent Per von Soosten\\
Department of Mathematics\\
Harvard University\\
\verb+vonsoosten@math.harvard.edu+ 
\end{minipage}%
\begin{minipage}{0.5\linewidth}
\noindent Simone Warzel\\
MCQST \& Zentrum Mathematik \\
Technische Universit\"{a}t M\"{u}nchen\\
\verb+warzel@ma.tum.de+
\end{minipage}
\end{document}